\newtheorem{theorem}{Theorem}[section]
\newtheorem{corollary}[theorem]{Corollary}
\newtheorem{proposition}[theorem]{Proposition}
\theoremstyle{definition}
\newtheorem{remark}{Remark}}
\newcommand{\R}{\mathbb{R}}
\begin{document}

\title[non-injective polynomial local diffeomorphisms]{A new class of non-injective polynomial local diffeomorphisms on the plane}

\author[F. Fernandes]
{{Filipe Fernandes}}

\address{Departamento de Matem\'{a}tica, Universidade Federal de S\~ao Carlos, 
13565--905 S\~ao Carlos, S\~ao Paulo, Brazil}
\email{filipefernandes@dm.ufscar.br} 

\subjclass[2010]{Primary: 14R15; Secondary: 14P99.}

\keywords{Jacobian conjecture, real Jacobian conjecture, local polynomial diffeomorphism}

\date{\today}

\begin{abstract}
In this short note we provide the example with the lowest degree known so far of a non-injective local polynomial diffeomorphism $F=(p,q):\mathbb{R}^2 \to \mathbb{R}^2$. In our example $p$ has degree $10$ and $q$ has degree $15$, rather than $10$ and $25$, respectively, known up to now as the smallest degrees for the coordinates of $F$.
Our construction was based on S. Pinchuk celebrated counterexample to the real Jacobian conjecture.
\end{abstract}

\maketitle

\section{Introduction} 
The \emph{real Jacobian conjecture} claims that a polynomial local diffeomorphism $F = (p,q): \R^2 \to \R^2$ must be injective. It was proved false by means of a class of counterexamples, the \emph{Pinchuk maps}, constructed by Pinchuk [4]. A Pinchuk map $F = (p,q)$ is such that the polynomial $p$ is always the same and the polynomial $q$ varies as the sum of a fixed polynomial with another one that satisfies a suitable differential equation. The degree of $p$ is $10$ and it is known that the smallest possible degree of $q$ in a Pinchuk map is $25$ \cite{campbell}. 

On the other hand it is known that any polynomial local diffeomorphism $F=(p,q)$ with $\deg(p) \leq 4$ satisfies the real Jacobian conjecture \cite{BOO,BS,Gw}. In this note we provide a new class of counterexamples to the real Jacobian conjecture, where the polynomial $p$ is the same as in a Pinchuk map, but now it is possible to take $q$ with degree $15$.

\section{Construction of the Example}

We consider the same three auxiliary polynomials as in Pinchuk's paper:
$$
t=xy-1,\ \ \ \ h=xt^2+t, \ \ \ \ f=\frac{h^3+h^2}{h-t}=(xt+1)^2(t^2+y).
$$
From now on we denote by $J(a,b)$ the Jacobian determinant of a map $(a,b): \mathbb{R}^2 \to \mathbb{R}^2$, that is,
$$
J(a,b)=\frac{\partial a}{\partial x}\frac{\partial b}{\partial y}-\frac{\partial a}{\partial y}\frac{\partial b}{\partial x}.
$$
As in \cite[Lemma 2.1]{pinchuk}:
\begin{proposition}\label{jhtjfh}
The following differential properties are true:
\begin{enumerate}[label={\textnormal{(\alph*)}}]
\item\label{jht} $J(h,t)=h-t,$
\item\label{jfh} $J(f,h)=-f$
\end{enumerate}
\end{proposition}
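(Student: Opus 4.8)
The plan is to verify both identities by direct computation, exploiting the structure $h=xt^2+t$ and $f=(xt+1)^2(t^2+y)$ together with the basic observations that $J(x,y)=1$ and that the Jacobian is a derivation in each slot. First I would record $J(t,x)=\partial t/\partial y = x$ (since $J(t,x)=t_x x_y - t_y x_x = -t_y = -x$, so actually $J(t,x)=-x$) and $J(y,t)=-t_x=-y$; more useful will be $J(x,t)=t_y=x$ and $J(y,t)=-t_x=-y$. From these one gets, for any differentiable $a$, the formula $J(a,t)=a_x\,t_y-a_y\,t_x = x\,a_x - y\,a_y$, which is the Euler-type operator that will do most of the work.

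For part \ref{jht}, I would compute $J(h,t)=-J(t,h)$ but more directly write $J(h,t)=x\,h_x - y\,h_y$ using the operator above. With $h=xt^2+t$ and $t_x=y$, $t_y=x$ one has $h_x = t^2 + 2xt\,y + y = t^2+y+2xyt$ and $h_y = 2xt\,x + x = x(2xt+1)$; then $x h_x - y h_y = x t^2 + xy + 2x^2 y t - 2x^2 y t - xy = x t^2$. Hmm, that gives $xt^2$, not $h-t=xt^2$. Indeed $h-t = xt^2+t-t = xt^2$, so this checks out. Thus \ref{jht} follows from the single line $J(h,t)=x h_x - y h_y = xt^2 = h-t$.

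For part \ref{jfh}, the cleanest route is bilinearity and the chain rule applied to $f=\frac{h^3+h^2}{h-t}$, treating $f$ as a rational function of $h$ and $t$: write $f = \phi(h,t)$ with $\phi = (h^3+h^2)/(h-t)$, so by the chain rule $J(f,h) = \phi_t\,J(t,h) = \phi_t\,(-J(h,t)) = -(h-t)\,\phi_t$ using \ref{jht}. Since $\phi_t = \partial_t\big[(h^3+h^2)(h-t)^{-1}\big] = (h^3+h^2)(h-t)^{-2}$, we get $J(f,h) = -(h-t)\cdot\frac{h^3+h^2}{(h-t)^2} = -\frac{h^3+h^2}{h-t} = -f$, which is exactly \ref{jfh}. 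The only subtlety is justifying the chain-rule step: one must check that $J(\cdot,h)$ annihilates $h$ (clear, $J(h,h)=0$) so that only the $t$-dependence of $f$ contributes, and that $f$, though a priori only rational in $(h,t)$, is a genuine polynomial so all derivatives are defined on all of $\R^2$; the factorization $f=(xt+1)^2(t^2+y)$ given in the statement takes care of that.

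The main obstacle is essentially bookkeeping: one must be careful that the chain-rule identity $J(g(h,t),h)=g_t(h,t)\,J(t,h)$ is applied where $g=\phi$ is a rational function whose denominator $h-t$ vanishes on a curve, so strictly speaking the clean derivation above is valid on the open set $\{h\neq t\}$ and then extends to all of $\R^2$ by continuity (both sides being polynomials). An alternative, fully elementary verification is to expand $f=(xt+1)^2(t^2+y)$ directly, compute $f_x,f_y,h_x,h_y$ as polynomials in $x,y$, and check $f_x h_y - f_y h_x = -f$ by polynomial algebra; this avoids any analytic subtlety but is more tedious. I would present the chain-rule argument as the main proof and relegate the brute-force check to a remark or omit it.
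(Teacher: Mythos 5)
Your proposal is correct and follows essentially the same route as the paper: part \ref{jht} by direct computation of the Jacobian of $h$ against $t$, and part \ref{jfh} by applying the derivation/chain-rule property of $J(\cdot,h)$ to $f=(h^3+h^2)(h-t)^{-1}$ (so that only the $t$-dependence contributes) and then invoking \ref{jht}. The only difference is cosmetic: the paper gets \ref{jht} in one line from $J(xt^2+t,t)=t^2J(x,t)$ rather than expanding $h_x,h_y$ explicitly, and it does not comment on the locus $h=t$, which, as you note, is harmlessly handled by polynomial identity/continuity.
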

\begin{proof}
Both items follow from straightforward computations using the chain and the product rules and the linearity of differentiation, as well as the usual determinant properties. For item \ref{jht}, we have 
$$J(h,t)=J(xt^2+t,t)=t^2J(x,t)=xt^2=h-t.$$
For item \ref{jfh}, 
$$
\begin{aligned}
J(f,h) =J\left(\frac{h^3+h^2}{h-t},h\right)&=(h^3+h^2)J\left((h-t)^{-1},h\right) \\
&=-\frac{h^3+h^2}{(h-t)^2}J(-t,h)\\
&=-f,
\end{aligned}
$$
where the last equality follows from item \ref{jht}.
\end{proof}

As in Pinchuk example, we take the first coordinate of our example as
$$
p = f +h
$$
which can be shown being, up to scaling, the only linear combination of f and h that has a non-vanishing gradient.
Moreover, since the level set $p=0$ is disconnected, as $xt+1$ is a factor of $p$ and $xt+1=0$ is equivalent to $y=(x-1)/x^2$ that has two connected components, it follows that for any $q$ such that $J(p,q)>0$, the map $(p,q)$ is not a global diffeomorphism, and so it is not injective as we can conclude from the main result of \cite{Rosenlicht}.

\begin{proposition}\label{jp}
Let $m$ be an integer and $G$ be a real differentiable function, the following holds:
$$
J(p,G(h)f^m)=-f^{m+1}G'(h)+mf^mG(h)
$$
\end{proposition}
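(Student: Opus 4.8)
The plan is to exploit the fact that, for a fixed first argument $a$, the operator $b \mapsto J(a,b)$ is a derivation on differentiable functions: it is $\R$-linear, satisfies the Leibniz rule $J(a,bc)=b\,J(a,c)+c\,J(a,b)$, and obeys the chain rule $J(a,\phi(u))=\phi'(u)\,J(a,u)$; moreover $J(a,b)=-J(b,a)$ and the analogous derivation rules hold in the first slot. All of these follow at once from the definition of $J$ and the usual rules of differentiation, in the same spirit as the computations in Proposition \ref{jhtjfh}.

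Since $p=f+h$, I would first use linearity in the first argument to split
$$J(p,G(h)f^m)=J(f,G(h)f^m)+J(h,G(h)f^m),$$
and then compute each summand separately. For the first, applying the Leibniz and chain rules gives $J(f,G(h)f^m)=G(h)\,J(f,f^m)+f^m\,J(f,G(h))=m f^{m-1}G(h)\,J(f,f)+f^m G'(h)\,J(f,h)$. Here $J(f,f)=0$, and $J(f,h)=-f$ by Proposition \ref{jhtjfh}\ref{jfh}, so the first summand reduces to $-f^{m+1}G'(h)$.

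For the second summand, the same expansion yields $J(h,G(h)f^m)=m f^{m-1}G(h)\,J(h,f)+f^m G'(h)\,J(h,h)$, where now $J(h,h)=0$ and $J(h,f)=-J(f,h)=f$, again by Proposition \ref{jhtjfh}\ref{jfh}. Hence the second summand equals $m f^m G(h)$, and adding the two contributions produces exactly $-f^{m+1}G'(h)+m f^m G(h)$, which is the claimed identity.

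I do not expect any genuine obstacle here: the argument is a direct, if slightly bookkeeping-heavy, application of the derivation and chain-rule properties of $J$ together with Proposition \ref{jhtjfh}. The only point that deserves a remark is the case $m\le 0$, in which $G(h)f^m$ and the chain-rule identity $J(a,f^m)=m f^{m-1}J(a,f)$ should be interpreted on the open set where $f\neq 0$ (equivalently, as an identity of rational functions); this causes no difficulty, since $f$ vanishes only on a set of measure zero and the identity being proved is of rational type.
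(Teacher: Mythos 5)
Your proof is correct and follows essentially the same route as the paper: split $J(p,G(h)f^m)$ as $J(f,G(h)f^m)+J(h,G(h)f^m)$, apply the Leibniz and chain rules for $J$ in the second slot, and invoke $J(f,h)=-f$ from Proposition \ref{jhtjfh}\ref{jfh}. Your added remark about interpreting the identity where $f\neq 0$ when $m\le 0$ is a reasonable extra precaution that the paper leaves implicit.
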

\begin{proof}
Acting analogously as in the proof of Proposition \ref{jhtjfh},
$$
\begin{aligned}
J(p,G(h)f^m) &=J(f,G(h)f^m)+J(h,G(h)f^m)\\
&=f^mJ(f,G(h))+G(h)J(h,f^m)\\
&=f^mG'(h)J(f,h)+mG(h)f^{m-1}J(h,f),
\end{aligned}
$$
and the claimed identity now follows from item \ref{jfh} of Proposition \ref{jhtjfh}.
\end{proof}
The next step in the construction is to take $q$ as
\begin{equation}\label{q}
q=\sum _{i=-2}^{1}{f}^{i}M_i(h),
\end{equation}
where $M_i(h), i\in \{-2,-1,0,1\}$, are differentiable functions. Choosing $q$ in this way is convenient because after computing $J(p,q)$, we can find conditions on $M_i(h)$ for the Jacobian to be positive and for $q$ to be a polynomial.

\begin{proposition}\label{jpq}
Let $q$ as in \eqref{q}, then
\begin{equation}\label{jjpq}
J(p,q)=-2\,{\frac {M_{{-2}} \left( h \right) }{{f}^{2}}}+\sum _{i=-1}^{1}{f}^
{i} \left( iM_{{i}} \left( h \right) -M_{{i-1}}' \left( h \right) 
 \right) -M_{{1}}' \left( h \right) {f}^{2}.
\end{equation}
\end{proposition}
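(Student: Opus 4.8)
The plan is to exploit the bilinearity of the Jacobian determinant together with the single key computation already packaged in Proposition \ref{jp}. Since $q = \sum_{i=-2}^{1} f^i M_i(h)$ is a finite sum, linearity of $J(p,\cdot)$ in its second argument gives
$$
J(p,q) = \sum_{i=-2}^{1} J\bigl(p, M_i(h) f^i\bigr).
$$
Now each summand is exactly of the form handled by Proposition \ref{jp}, with $G = M_i$ and $m = i$ (note $m$ is allowed to be negative there, since the proof of Proposition \ref{jp} only used the product rule for $f^m$, valid for any integer exponent away from the zero set of $f$). Applying that proposition term by term yields
$$
J(p,q) = \sum_{i=-2}^{1} \Bigl( -f^{i+1} M_i'(h) + i f^i M_i(h) \Bigr).
$$

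The remaining work is purely bookkeeping: reindex the first sum by setting $j = i+1$, so it runs over $j \in \{-1,0,1,2\}$ and contributes $-\sum_{j=-1}^{2} f^j M_{j-1}'(h)$, then collect powers of $f$ against the second sum $\sum_{i=-2}^{1} i f^i M_i(h)$. The $i=-2$ term of the second sum produces $-2 M_{-2}(h)/f^2$; the $j=2$ term of the reindexed first sum produces $-M_1'(h) f^2$; and for the overlapping range $i \in \{-1,0,1\}$ the two sums combine into $\sum_{i=-1}^{1} f^i\bigl(i M_i(h) - M_{i-1}'(h)\bigr)$. (The $i=0$ term of the second sum vanishes because of the factor $i$, which is why it is absorbed silently into the combined middle sum.) Assembling these three pieces gives precisely \eqref{jjpq}.

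I do not anticipate a genuine obstacle here; the proposition is essentially a formal consequence of Proposition \ref{jp} and the algebra of the Jacobian bracket. The one point deserving a word of care is the use of Proposition \ref{jp} with negative $m$: one should observe that all identities are taking place in the localization of the polynomial ring at $f$ (equivalently, on the complement of $\{f=0\}$ in $\R^2$), where $f^{-1}$ and $f^{-2}$ are legitimate differentiable functions, so that the chain-and-product-rule manipulations in the proof of Proposition \ref{jp} remain valid verbatim. With that understood, the computation goes through exactly as sketched, and the displayed formula \eqref{jjpq} follows.
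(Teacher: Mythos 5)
Your proposal is correct and follows exactly the paper's own argument: linearity of $J(p,\cdot)$, term-by-term application of Proposition \ref{jp} with $G=M_i$ and $m=i$, and then collecting powers of $f$. Your extra remarks on the reindexing and on the validity of Proposition \ref{jp} for negative $m$ away from $\{f=0\}$ are sound details the paper leaves implicit.
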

\begin{proof}
By Proposition \ref{jp} we have
$$
J(p,q)=\sum _{i=-2}^{1}J\left(p,{f}^{i}M_i(h)\right)=\sum _{i=-2}^{1}\left(-f^{i+1}M_i'(h)+if^iM_i(h)\right),
$$
and the result follows from collecting the powers of $f$.
\end{proof}
Our goal now is to find hypothesis on $q$ in order to have $J(p,q)>0$. In his construction, Pinchuk uses the fact that $f$ and $t$ do not vanish simultaneously to construct $q$ in such a way that $J(p,q)=t^2+f^2+M(f,h,t)^2>0$ where $M$ is a suitable polynomial. Instead, by using Groebner basis we looked for other algebraic combinations of $t$ and $h$ in a way that these combinations did not vanish simultaneously with $f$. We found out that $f$ and $$c(h^2+h)-t=\frac{h^3+h^2}{f}-h+c(h+h^2)$$ have this property, where $c\in \mathbb{R}$:
\begin{proposition}\label{vanish}
Let $c\in \mathbb{R}$, then $f$ and  $c(h^2+h)-t$ do not vanish simultaneously.
\end{proposition}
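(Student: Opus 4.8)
The plan is to argue by contradiction: suppose $f$ and $c(h^2+h)-t$ have a common zero $(x,y)\in\R^2$. The first and main step is to observe that the equation $f=0$ severely restricts $h$. From the definition of $f$ one has the polynomial identity $f\,(h-t)=h^3+h^2=h^2(h+1)$, so evaluating at the common zero gives $h^2(h+1)=0$; hence $h\in\{0,-1\}$, and in particular $h^2+h=h(h+1)=0$.

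Once $h^2+h=0$ is known, the second vanishing condition simplifies to $0=c(h^2+h)-t=-t$, so $t=0$ at the point. It then remains to rule out a common zero of $f$ and $t$, which I plan to do directly from the factored form $f=(xt+1)^2(t^2+y)$: at $t=0$ this becomes $f=y$, whereas $t=xy-1=0$ forces $xy=1$ and thus $y\neq 0$; therefore $f=y\neq 0$, contradicting $f=0$. That contradiction completes the proof.

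I would also note an alternative to the first step that sidesteps the rational expression for $f$ (useful in case one worries about the locus $h=t$): split on the factors of $f=(xt+1)^2(t^2+y)$, using $h=t(xt+1)$. If $xt+1=0$ then $h=0$; if instead $t^2+y=0$, substituting $y=-t^2$ into $t=xy-1$ yields $xt^2=-t-1$, whence $h=xt^2+t=-1$. Either way $h\in\{0,-1\}$, and one proceeds as above.

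I do not anticipate a real obstacle; the only subtlety is to avoid dividing by $h-t$ where it vanishes, which is precisely why I would run the first step through the polynomial identity $f(h-t)=h^2(h+1)$ rather than through $f=(h^3+h^2)/(h-t)$.
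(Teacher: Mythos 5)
Your proof is correct and follows essentially the same route as the paper: both arguments show that $f=0$ forces $h^2+h=0$ (you via the polynomial identity $f(h-t)=h^2(h+1)$, the paper via the factors $xt+1$ and $t^2+y$, which is exactly your stated alternative) and that $t\neq 0$ on the zero set of $f$. The only difference is organizational—you run it as a contradiction ending with $t=0$ being impossible, while the paper directly concludes $c(h^2+h)-t=-t\neq 0$.
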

\begin{proof}
By definition, if $f=0$, we have that $xt+1=0$ or $t^2+y=0$. In both cases $t\neq0$. However, if $xt+1=0$ then $h=0$, and if $t^2+y=0$ then $h+1=0$. In both cases $c(h^2+h)=0$ concluding that $f$ and  $c(h^2+h)-t$ do not vanish simultaneously.
\end{proof}
Next theorem provides conditions on $q$, defined by \eqref{q}, in a way that
\begin{equation}\label{jjpq2}
J(p,q)=\left( {\frac {h^3+h^2}{f}}-h+c \left( {h}^{2}+h \right) 
 \right) ^{2}+ \left( {\frac {h^3+h^2}{f}}+{N_0}
 \left( h \right) +{N_1} \left( h \right) f \right) ^{2}+{f}^{2},
\end{equation}
where $N_0$ and $N_1$ are suitable differentiable functions, and therefore $J(p,q)>0$. It will also show conditions for $q$ to be a polynomial.
\begin{theorem}\label{jacobian}
Let $q$ be as in \eqref{q}, and $N_0$ a differentiable function, defining
\begin{align*}
M_{{-2}} \left( h \right) &=- \left( {h}^{3}+{h}^{2} \right) ^{2},\\
M_{{-1}} \left( h \right)&=2\left( h^3+h^2 \right)  \left( (3-c)(h^2+h)-{N_0}
 \left( h \right) \right),\\
{N_1} \left( h \right) &=-{\frac {{M_{-1}'}
 \left( h \right) +{h}^{2} \left( ch+c-1 \right) ^{2}+ {N_0}
 \left( h \right) ^{2}}{2({h}^{3}+{h}^{2})}},\\
M_{{1}} \left( h \right) &=-\int_{0}^{h}\! {N_1} \left( s
 \right)  ^{2}\,{\rm d}s-h,\\
 M_{{0}} \left( h \right) &=\int_{0}^{h}\!\big(M_{{1}} \left( s \right)-2\,{N_0} \left( s \right) 
{N_1} \left( s \right)\big) \,{\rm d}s,
\end{align*}
then $J(p,q)$ satisfies \eqref{jjpq2}. Moreover, given any $K(h)\in\mathbb{R}[h]$, if we additionally assume
$$N_0(h)=-h+(h^2+h)K(h),$$
then $q$ is a polynomial in $(x,y)$.
\end{theorem}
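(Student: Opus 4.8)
The theorem has two parts: the identity \eqref{jjpq2}, and the polynomiality of $q$. For the identity, the plan is a direct comparison of both sides as Laurent expressions in $f$ whose coefficients are functions of $h$. Expanding the sum in \eqref{jjpq}, the left-hand side is
$$
J(p,q)=-\frac{2M_{-2}(h)}{f^{2}}-\frac{M_{-1}(h)+M_{-2}'(h)}{f}-M_{-1}'(h)+\big(M_{1}(h)-M_{0}'(h)\big)f-M_{1}'(h)f^{2},
$$
while expanding the three squares in \eqref{jjpq2} puts the right-hand side in the same shape, with coefficients of $f^{-2},f^{-1},f^{0},f^{1},f^{2}$ equal respectively to $2(h^{3}+h^{2})^{2}$, $2(h^{3}+h^{2})\big(-h+c(h^{2}+h)+N_{0}\big)$, $\big(-h+c(h^{2}+h)\big)^{2}+N_{0}^{2}+2(h^{3}+h^{2})N_{1}$, $2N_{0}N_{1}$, and $N_{1}^{2}+1$ (the term $(h^{3}+h^{2})/f$ being responsible for the negative powers). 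Equating the two lists gives a triangular system whose successive solution is exactly the list in the theorem: the $f^{-2}$-equation yields $M_{-2}$; substituting it (and $M_{-2}'$) into the $f^{-1}$-equation yields $M_{-1}$; the $f^{0}$-equation, after the simplification $\big(-h+c(h^{2}+h)\big)^{2}=h^{2}(ch+c-1)^{2}$, yields $N_{1}$; the $f^{2}$-equation yields $M_{1}'=-(N_{1}^{2}+1)$, hence $M_{1}$ (normalized by $M_{1}(0)=0$); and the $f^{1}$-equation yields $M_{0}'=M_{1}-2N_{0}N_{1}$, hence $M_{0}$. So \eqref{jjpq2} holds, with no difficulty beyond careful bookkeeping.

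For the polynomiality of $q$, I would write
$$
q=\frac{M_{-2}(h)+f\,M_{-1}(h)}{f^{2}}+M_{0}(h)+f\,M_{1}(h),
$$
so that it suffices to show the first summand is a polynomial in $(x,y)$ and that $M_{0},M_{1}$ are polynomials in $h$. The negative powers of $f$ are removed by the relation $h^{3}+h^{2}=f\,(h-t)$, which is just Pinchuk's $f=(h^{3}+h^{2})/(h-t)$ rewritten: it gives $M_{-2}(h)=-(h^{3}+h^{2})^{2}=-f^{2}(h-t)^{2}$ and $M_{-1}(h)=2(h^{3}+h^{2})\big((3-c)(h^{2}+h)-N_{0}\big)=2f(h-t)\big((3-c)(h^{2}+h)-N_{0}\big)$, so the first summand collapses to $-(h-t)^{2}+2(h-t)\big((3-c)(h^{2}+h)-N_{0}\big)$, an honest polynomial in $(x,y)$ since $N_{0}$ is a polynomial in $h$ by hypothesis. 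Because $M_{1}'=-(N_{1}^{2}+1)$ and $M_{0}'=M_{1}-2N_{0}N_{1}$ exhibit $M_{1}$ and $M_{0}$ as iterated antiderivatives of polynomial combinations of $N_{0}$ and $N_{1}$, it all reduces to showing that $N_{1}$ is a polynomial in $h$.

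This reduction is where I expect the only real work: showing that the numerator $M_{-1}'(h)+h^{2}(ch+c-1)^{2}+N_{0}(h)^{2}$ is divisible by $2(h^{3}+h^{2})=2h^{2}(h+1)$. Divisibility by $h^{2}$ follows at once from the special form $N_{0}(h)=-h+(h^{2}+h)K(h)$, which gives $N_{0}(0)=0$ and $M_{-1}(h)=2h^{3}(h+1)A(h)$ with $A(h)=(3-c)(h+1)+1-(h+1)K(h)$, so each of the three terms of the numerator carries a visible factor $h^{2}$; for divisibility by the remaining $h+1$, one factors $h^{2}$ out and evaluates at $h=-1$, where the three contributions are $-2A(-1)$, $1$, and $1$, and since $A(-1)=1$ they sum to $0$. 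Hence $N_{1}\in\mathbb{R}[h]$, therefore $M_{1},M_{0}\in\mathbb{R}[h]$, and $q\in\mathbb{R}[x,y]$. The crux is thus this single divisibility check, which works precisely because the special form of $N_{0}$ forces the vanishings at $h=0$ and $h=-1$; nothing else about $N_{0}$ enters. (As a byproduct, \eqref{jjpq2} yields $J(p,q)>0$: since $(h^{3}+h^{2})/f=h-t$, the first square there equals $\big(c(h^{2}+h)-t\big)^{2}$, which by Proposition \ref{vanish} never vanishes simultaneously with $f^{2}$.)
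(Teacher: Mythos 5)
Your proposal is correct, and its overall architecture coincides with the paper's: compare coefficients of the powers of $f$ (treating $f,h$ as independent) to get the triangular system defining the $M_i$'s, clear the negative powers of $f$ via $h^3+h^2=f(h-t)$, and reduce everything to showing $N_1\in\mathbb{R}[h]$. The one place where you genuinely diverge is that last divisibility. The paper rewrites $N_1$ by completing the square as
$$
N_1(h)=-\frac{\bigl(3h^2+2h-N_0(h)\bigr)^2}{2(h^3+h^2)}-\tfrac12(c-3)(ch+c-7h-5)+N_0'(h),
$$
so that the hypothesis $N_0=-h+(h^2+h)K$ makes $3h^2+2h-N_0=(h^2+h)\bigl(3-K(h)\bigr)$ and the denominator cancels against the square; this has the advantage of isolating exactly the condition needed on $N_0$, namely $N_0\equiv -h \pmod{h^2+h}$, which is why the theorem is stated with that form. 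You instead verify directly that the numerator $M_{-1}'+h^2(ch+c-1)^2+N_0^2$ is divisible by $h^2(h+1)$, pulling out the visible factor $h^2$ from each term and checking the residue at $h=-1$ vanishes ($-2A(-1)+1+1=0$ with $A(-1)=1$); I checked the arithmetic and it is right. Your route is a touch more computational and makes the role of the specific form of $N_0$ less transparent, but it is equally rigorous and arguably more elementary, since it needs no clever algebraic rearrangement. Your closing remark identifying the first square with $\bigl(c(h^2+h)-t\bigr)^2$ and invoking Proposition \ref{vanish} for positivity matches what the paper does in the discussion surrounding \eqref{jjpq2}.
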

\begin{proof}
By acting as if $f$ and $h$ are independent variables, it suffices to compare the coefficients of $f^m$ in \eqref{jjpq} and \eqref{jjpq2} to verify the first part of the result. 
For the second part we first observe that $M_{-2}(h)/f^2$ and $M_{-1}(h)/f$ are polynomials in $(x,y)$ since $(h^3+h^2)/f$ is a polynomial in $(x,y)$. Now by unwinding $M_{-1}'(h)$ and completing squares of $N_0(h)$ on the $N_1$ expression it follows that
$$
{N_1} \left( h \right) =-{\frac { \left( 3\,{h}^{2}+2\,h-{N_0} \left( h \right)  \right) ^{
2}}{2({h}^{3}+{h}^{2})}}-\frac{1}{2} \left( c-3 \right)  \left( ch+c-7\,
h-5 \right) +{N_0'} \left( h \right).
$$
Under the hypothesis it now follows that $N_1$ is a polynomial in $h$, so $M_0$ and $M_1$ are polynomials, and therefore $q$ is a polynomial as we wanted.
\end{proof}

\begin{corollary}\label{grau15}
If we take $p=f+h$ and
$$
q=-{\frac { \left( {h
}^{3}+{h}^{2} \right) ^{2}}{{f}^{2}}} -{
\frac {2{h}^{2} \left( {h}^{3}+{h}^{2} \right) }{f}}+4\,{h}^{3}+\frac{3h^2}{2}
 -5hf,
$$
the map $(p,q)$ is a non-injective polynomial local diffeomorphism. The degree of $p$ is $10$ and the degree of $q$ is $15$.
\end{corollary}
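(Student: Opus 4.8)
The plan is to obtain Corollary~\ref{grau15} as the particular instance of Theorem~\ref{jacobian} corresponding to the choices $c=1$ and $K(h)=3$ (the constant polynomial), so that $N_0(h)=-h+3(h^2+h)=3h^2+2h$. First I would substitute these choices into the formulas of Theorem~\ref{jacobian}. One reads off at once $M_{-2}(h)=-(h^3+h^2)^2$ and
$$
M_{-1}(h)=2(h^3+h^2)\big((3-1)(h^2+h)-(3h^2+2h)\big)=-2h^2(h^3+h^2).
$$
Feeding $M_{-1}$ and $N_0$ into the expression for $N_1$ — most conveniently in the simplified form produced inside the proof of Theorem~\ref{jacobian} — I expect the rational expression to collapse to the \emph{constant} $N_1(h)=-2$. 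Since $N_1$ is then constant, the two remaining integrals are elementary, giving $M_1(h)=-\int_0^h 4\,ds-h=-5h$ and $M_0(h)=\int_0^h\big(-5s-2(3s^2+2s)(-2)\big)\,ds=\int_0^h(12s^2+3s)\,ds=4h^3+\frac{3}{2}h^2$.

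Plugging these four functions into \eqref{q} and using the identity $(h^3+h^2)/f=h-t$, immediate from the definition $f=(h^3+h^2)/(h-t)$, I would recover exactly the $q$ of the statement, now visibly a polynomial since it equals $-(h-t)^2-2h^2(h-t)+4h^3+\frac{3}{2}h^2-5hf$. Theorem~\ref{jacobian} then applies, and \eqref{jjpq2} becomes, with the present data,
$$
J(p,q)=(h^2+h-t)^2+(3h^2+3h-t-2f)^2+f^2,
$$
a sum of three squares of polynomials, hence everywhere $\ge 0$. Were $J(p,q)$ to vanish at a point, both $h^2+h-t$ and $f$ would vanish there, contradicting Proposition~\ref{vanish} with $c=1$; thus $J(p,q)>0$ on all of $\mathbb{R}^2$ and $(p,q)$ is a polynomial local diffeomorphism. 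Non-injectivity follows from the discussion just before Proposition~\ref{jp}: $xt+1$ divides $p=f+h$, so $\{p=0\}$ is disconnected, and by the consequence of Rosenlicht's theorem \cite{Rosenlicht} recalled there no polynomial map $(p,q)$ with $J(p,q)>0$ and disconnected zero set of $p$ can be injective.

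Finally I would compute the degrees. From $\deg t=2$, $\deg h=5$, $\deg f=10$ one gets $\deg p=\deg(f+h)=10$, while the four summands of $q=-(h-t)^2-2h^2(h-t)+4h^3+\frac{3}{2}h^2-5hf$ have degrees $10,15,15,15$. The only thing that could spoil $\deg q=15$ is a cancellation among the three degree-$15$ terms, so I would inspect the top-degree homogeneous forms: the degree-$5$ form of $h$ is $x^3y^2$ and the degree-$10$ form of $f$ is $x^6y^4$, whence the degree-$15$ part of $q$ is $(-2+4-5)x^9y^6=-3x^9y^6\neq0$ and $\deg q=15$.

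The routine but slightly fiddly steps are the simplification establishing $N_1\equiv-2$ and the check that $\sum_{i}f^{i}M_i(h)$ is literally the stated $q$; the one genuinely substantive point is the non-cancellation argument for $\deg q=15$. Everything else is a direct appeal to Theorem~\ref{jacobian}, Proposition~\ref{vanish}, and the Rosenlicht-based remark already in the text.
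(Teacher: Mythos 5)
Your proposal is correct and follows exactly the paper's route: instantiate Theorem~\ref{jacobian} with $c=1$ and $K(h)=3$ (so $N_0(h)=3h^2+2h$), verify that the resulting $M_i$ give the stated $q$, and invoke Proposition~\ref{vanish} and the Rosenlicht-based remark for positivity of the Jacobian and non-injectivity. You supply more detail than the paper does — in particular the explicit computation $N_1\equiv -2$ and the check that the degree-$15$ top forms sum to $-3x^9y^6\neq 0$, which the paper leaves implicit — but the argument is the same.
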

\begin{proof}
It suffices to take $c=1$ and $K(h)=3$, hence $N_0(h)=3h^2+2h$, on Theorem \ref{jacobian}. The polynomial $q$ has degree $15$, since the degrees of $h$ and $f$ are $5$ and $10$, respectively.
\end{proof}
\begin{remark}
If we take $(p,q)$ as in Corollary \ref{grau15}, we have
$$
J(p,q)= \left( {\frac {{h}^{3}+{h}^{2}}{f}}+{h}^{2} \right) ^{2}+ \left( {
\frac {{h}^{3}+{h}^{2}}{f}}+3\,{h}^{2}+2\,h-2\,f \right) ^{2}+{f}^{2}.
$$
\end{remark}
\begin{remark}
We point out that by taking $c=0$ and $K(h)=0$ on Theorem \ref{jacobian}, $q$ is the Pinchuk map of degree $25$ that can be found on \cite{vanden,campbell}, so our construction generalizes Pinchuk's construction up to triangular automorphisms (see \cite[Section 2]{campbell}).
\end{remark}

\section{Acknowledgments}
The author thanks Prof. Francisco Braun for introducing the subject and for many suggestions on earlier versions of this paper.
The author was partially financed by the Coordena\c{c}\~ao de Aperfei\c{c}oamento de Pessoal de N\'ivel Superior - Brasil (CAPES) - Finance Code 001.

\end{document}